\newcommand{\zz}{\mathbb{Z}}
\newcommand{\qq}{\mathbb{Q}}
\newcommand{\A}{\mathcal{A}}
\newcommand{\ints}{\mathcal{O}}
\newcommand{\dnd}{\nmid}
\newcommand{\isom}{\cong}
\newcommand{\ord}{\mathrm{ord}}
\newcommand{\Gal}{\mathrm{Gal}}
\newcommand{\N}{\mathrm{N}}
\newcommand{\p}{\mathfrak{p}}
\newtheorem{thm}{Theorem}
\newtheorem{lma}{Lemma}
\theoremstyle{remark}
\newtheorem*{rmk}{Remark}
\begin{document}
\title{A Barban-Davenport-Halberstam asymptotic for number fields\\
(Appeared in \textit{Proceedings of the American Mathematical Society})}

\author{Ethan Smith}
\address{
Department of Mathematical Sciences\\
Michigan Technological University\\
1400 Townsend Drive\\
Houghton, MI 49931-1295
}
\email{ethans@mtu.edu}
\urladdr{www.math.mtu.edu/~ethans}

\begin{abstract}
Let $K$ be a fixed number field, and assume that $K$ is Galois over $\qq$.
Previously, the author showed that when estimating the number of prime ideals 
with norm congruent to $a$ modulo $q$ via the Chebotar\"ev Density Theorem, 
the mean square error in the approximation 
is small when averaging over all $q\le Q$ and all 
appropriate $a$.  In this article, we replace the upper bound by an 
asymptotic formula. 
The result is related to the classical Barban-Davenport-Halberstam Theorem 
in the case $K=\qq$.
\end{abstract}
\keywords{
generalized Siegel-Walfisz Theorem, Barban-Davenport-Halberstam Theorem}
\subjclass[2000]{11N36, 11R44}

\maketitle

\section{Introduction}
One of the great results of the 1960s concerning the distribution of primes 
is that ``on average" they are well-distributed in arithmetic 
progressions.  In particular, Barban~\cite{Bar:1964} and, 
independently, Davenport and Halberstam~\cite{DH:1966,DH:1968} showed 
that the square of the error in the Prime Number Theorem for primes in 
arithmetic progressions is small on average.
More precisely, given positive integers $a$ and $q$, 
we define the weighted prime counting function $\theta(x;q,a)$ by 
\begin{equation*}
\theta(x;q,a):=\sum_{\substack{p\le x\\ p\equiv a\pmod q}}\log p.
\end{equation*}
The Prime Number Theorem for 
primes in arithmetic progressions states that if $\gcd(a,q)=1$, then
\begin{equation}
\theta(x;q,a)\sim\frac{x}{\varphi(q)},
\end{equation}
where $\varphi(q):=\#\{1\le a\le q: \gcd(a,q)=1\}$ 
is Euler's $\varphi$-function.
The Barban-Davenport-Halberstam Theorem (see~\cite{Dav:1980})
states that, for any fixed $M>0$,
\begin{equation}\label{original BDH}
\sum_{q\le Q}\sum_{\substack{a=1\\ \gcd(a,q)=1}}^q
\left(\theta(x;q,a)-\frac{x}{\varphi(q)}\right)^2
\ll xQ\log x,
\end{equation}
provided that $x(\log x)^{-M}\le Q\le x$.
Later, Montgomery~\cite{Mon:1970} and Hooley~\cite{Hoo:1975} each gave  
asymptotic formulations of this result valid for various ranges of $Q$.  
Hooley's method starts with the inequality~\eqref{original BDH}, and 
so at least implicitly relies on the large sieve.
Montgomery's method, however, is based on a 
result of Lavrik~\cite{Lav:1960} concerning the distribution of twin primes. 

With applications in mind, there have been several generalizations of this 
result to the integers of a number field.  See~\cite{Hin:1981, Wil:1969} for 
example.  In~\cite{Smi}, the author considered yet another generalization 
of~\eqref{original BDH} concerning the distribution of prime ideals of a 
number field.  See Theorem~\ref{bdh_gen} below.
In the present article, we are concerned with the appropriate asymptotic 
formulation.  See Theorem~\ref{asymp}.

\section{Statement of Main Theorem}

Let $K$ be a fixed number field.  We are concerned with the 
error in estimating sums of the form
\begin{equation*}
\theta_K(x;q,a):=\sum_{\substack{\N\p\le x,\\ \N\p\equiv a\pmod q}}\log\N\p
\end{equation*}
via the Chebotar\"ev Density Theorem.  Here, as usual, $\p$ denotes a 
prime ideal of the ring of integers $\ints_K$, and 
$\N\p:=\#(\ints_K/\p)$ denotes its norm.

Let $\zeta_q$ be a primitive $q$-th root of unity, and let $G_q$ denote 
the image of the natural map
\begin{equation*}
\begin{diagram}
\node{\Gal(K(\zeta_q)/K)}\arrow{e,J}
\node{\Gal(\qq(\zeta_q)/\qq)}\arrow{e,t}{\sim}
\node{(\zz/q\zz)^*.}
\end{diagram}
\end{equation*}
In this case, the Frobenius substitution is determined by the value $\N\p$ 
modulo $q$; and the Chebotar\"ev Density Theorem implies that
if $a\in G_q$, then
\begin{equation}\label{Cheb on theta}
\theta_K(x;q,a)\sim\frac{x}{\varphi_K(q)},
\end{equation}
where we have made the definition $\varphi_K(q):=\#G_q=\#\Gal(K(\zeta_q)/K)$.

If we assume further that $K/\qq$ is a Galois extension, then we have
the following corollary of Goldstein's generalization of the 
Siegel-Walfisz Theorem~\cite{Gol:1970}.  
If $a\in G_q$, then for any fixed $M>0$, 
\begin{equation}\label{gen SW}
\theta_K(x;q,a)
=\frac{x}{\varphi_K(q)}+O\left(\frac{x}{(\log x)^M}\right),
\end{equation}
provided that $q\le(\log x)^{M}$.
The following average error result is the main theorem of~\cite{Smi}, 
where we continue to assume that our number field $K$ is a Galois 
extension of $\qq$.
\begin{thm}\label{bdh_gen}
For a fixed $M>0$,
\begin{equation*}
\sum_{q\le Q}\sum_{a\in G_q}\left(
\theta_K(x;q,a)-\frac{x}{\varphi_K(q)}
\right)^2
\ll xQ\log x
\end{equation*}
if $x(\log x)^{-M}\le Q\le x$.
\end{thm}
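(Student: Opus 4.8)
The plan is to run the classical large sieve proof of~\eqref{original BDH} (as in~\cite{Dav:1980}) with the subgroup $G_q$ in place of $(\zz/q\zz)^*$ and with the generalized Siegel--Walfisz estimate~\eqref{gen SW} replacing its classical counterpart. Write $c_n:=\#\{\p:\N\p=n\}$, so that $c_n=0$ unless $n$ is a prime power, $c_n\le[K:\qq]$, and $\sum_{n\le x}c_n^2(\log n)^2\le[K:\qq]\sum_{\N\p\le x}(\log\N\p)^2\ll_K x\log x$ by the prime ideal theorem. Since a prime ideal $\p$ with $(\N\p,q)=1$ is unramified in $K(\zeta_q)/K$, one has $\N\p\bmod q\in G_q$; consequently, for \emph{any} Dirichlet character $\chi\bmod q$ whose restriction to $G_q$ equals a character $\psi$,
\[
\theta_K^*(x,\psi):=\sum_{\N\p\le x,\,(\N\p,q)=1}\psi(\N\p)\log\N\p=\sum_{n\le x}c_n\chi(n)\log n,
\]
because $\chi$ annihilates the prime powers dividing $q$. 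Expanding the square and using orthogonality on $G_q$ in the familiar way,
\[
\sum_{a\in G_q}\left(\theta_K(x;q,a)-\frac{x}{\varphi_K(q)}\right)^2
=\frac{1}{\varphi_K(q)}\bigl(\theta_K^*(x,\psi_0)-x\bigr)^2
+\frac{1}{\varphi_K(q)}\sum_{\psi\ne\psi_0}\bigl|\theta_K^*(x,\psi)\bigr|^2,
\]
$\psi_0$ being the trivial character of $G_q$ and the last sum running over the remaining characters of $G_q$. As $\theta_K^*(x,\psi_0)=\theta_K(x)+O_K(\omega(q)\log x)=x+O\bigl(xe^{-c\sqrt{\log x}}\bigr)$ and $\sum_{q\le Q}1/\varphi_K(q)\ll_K\sum_{q\le Q}1/\varphi(q)\ll\log x$ (the index $[(\zz/q\zz)^*:G_q]$ divides $[K:\qq]$), the first summand contributes $\ll_K x^2e^{-2c\sqrt{\log x}}\log x$, which is $\ll xQ\log x$ throughout the stated range. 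So it remains to estimate $W:=\sum_{q\le Q}\varphi_K(q)^{-1}\sum_{\psi\ne\psi_0}|\theta_K^*(x,\psi)|^2$.

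Next I would pass from $W$ to a sum over primitive Dirichlet characters. Grouping the Dirichlet characters modulo $q$ by their restriction to $G_q$ gives $W=\sum_{q\le Q}\varphi(q)^{-1}\sum_{\chi\in\mathcal{X}_q}\bigl|\sum_{n\le x}c_n\chi(n)\log n\bigr|^2$, where $\mathcal{X}_q$ is the set of Dirichlet characters modulo $q$ nontrivial on $G_q$. If $\chi\in\mathcal{X}_q$ is induced by the primitive character $\eta\bmod d$ (so $d\mid q$), then $\eta$ is again nontrivial on $G_d$, because reduction modulo $d$ carries $G_q$ \emph{onto} $G_d$; moreover $\sum_{n\le x}c_n\chi(n)\log n=\sum_{n\le x}c_n\eta(n)\log n+O_K\bigl((\log x)^3\bigr)$, the error being supported on the prime powers that divide $q$ but not $d$. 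Absorbing this error by $2ab\le a^2+b^2$ costs only $\ll Q(\log x)^6\ll xQ\log x$ in $W$. Re-summing over the primitive character, and using $\sum_{q\le Q,\,d\mid q}\varphi(q)^{-1}\le\varphi(d)^{-1}\sum_{m\le Q/d}\varphi(m)^{-1}\ll\varphi(d)^{-1}\log(2Q/d)$ (note $\varphi(dm)\ge\varphi(d)\varphi(m)$), I arrive at
\[
W\ll xQ\log x+\sum_{d\le Q}\frac{\log(2Q/d)}{\varphi(d)}\sum_{\substack{\eta\bmod d\ \mathrm{primitive}\\ \eta|_{G_d}\ \mathrm{nontrivial}}}\Bigl|\sum_{n\le x}c_n\eta(n)\log n\Bigr|^2.
\]

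Now set $R:=(\log x)^{M+1}$ and split the $d$-sum at $R$. For $d\le R$ each surviving $\eta$ satisfies $\sum_{n\le x}c_n\eta(n)\log n=\theta_K^*(x,\eta|_{G_d})=\sum_{a\in G_d}(\eta|_{G_d})(a)\,\theta_K(x;d,a)$, and since $\eta|_{G_d}$ is a nontrivial character of $G_d$ with $d\le(\log x)^{M+1}$, applying~\eqref{gen SW} with a large value of its parameter (together with $\sum_{a\in G_d}(\eta|_{G_d})(a)=0$) gives $\ll_B x(\log x)^{-B}$ for any fixed $B$; taking $B$ large, this part of $W$ is $\ll x^2(\log x)^{-M}\ll xQ\log x$. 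For $d>R$ the constraint that $\eta|_{G_d}$ be nontrivial is automatic once $x$ is large --- the Dirichlet characters trivial on $G_d$ cut out subfields of $K$ and so have conductor bounded in terms of $K$ --- so I may invoke the large sieve inequality after breaking the $d$-sum dyadically: on a block $D<d\le 2D$ one bounds $\log(2Q/d)/\varphi(d)\le\log(2Q/D)\,D^{-1}\,d/\varphi(d)$ and gets, using $\sum_{n\le x}c_n^2(\log n)^2\ll_K x\log x$, the block bound $\ll_K\log(2Q/D)\bigl(x/D+4D\bigr)x\log x$. Summing over $D=2^jR\le Q$: the $x/D$ part yields $\ll x^2(\log x)^2/R=x^2(\log x)^{1-M}\ll xQ\log x$, while the $4D$ part yields $\ll xQ\log x$ because $\sum_j 2^jR\log\bigl(2Q/(2^jR)\bigr)\ll Q$. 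Hence $W\ll xQ\log x$, and with the first summand already controlled this proves the theorem.

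The step needing the most care is the passage to primitive characters: one must ensure that it does not silently readmit the Dirichlet characters that are trivial on $G_q$ --- for those $\sum_{n\le x}c_n\chi(n)\log n\sim x$ and~\eqref{gen SW} is useless --- which is exactly why the surjectivity of $G_q\onto G_d$ is needed, and one must keep track that it is the conductor $d$, not the possibly much larger modulus $q$, that dictates whether~\eqref{gen SW} or the large sieve applies. Beyond this the argument runs parallel to the classical one; the only number-field inputs are the prime ideal theorem, the bound $\sum_{n\le x}c_n^2(\log n)^2\ll_K x\log x$, and~\eqref{gen SW}.
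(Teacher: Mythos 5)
The paper does not prove Theorem~\ref{bdh_gen} at all: it is imported verbatim from the author's earlier paper~\cite{Smi}, and the remark following the theorem only notes that \cite{Smi} proves the analogous statement with $\psi_K$ in place of $\theta_K$. So there is no in-paper proof to compare against. That said, your argument is correct and is precisely the large-sieve proof one would expect~\cite{Smi} to use: Plancherel on $\widehat{G_q}$ (valid because every $\p$ with $(\N\p,q\ints_K)=1$ has $\N\p\bmod q\in G_q$), passage to Dirichlet characters modulo $q$ using the fact that each $\psi\in\widehat{G_q}$ extends to exactly $\varphi(q)/\varphi_K(q)$ of them, reduction to primitive characters via the surjection $G_q\onto G_d$ (which is what keeps the characters trivial on $G_q$ from re-entering), the generalized Siegel--Walfisz bound~\eqref{gen SW} for conductors $d\le(\log x)^{M+1}$, and the multiplicative large sieve with a dyadic split for the rest. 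The two small non-classical points you flag --- that characters trivial on $G_d$ have conductor dividing $m_K$ and hence are never primitive of large conductor, and that $\varphi(q)/\varphi_K(q)=[A_q:\qq]\le[K:\qq]$ so that $\sum_{q\le Q}1/\varphi_K(q)\ll\log Q$ --- are exactly the inputs that make the number-field adaptation go through.
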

\begin{rmk}
To be precise, the main theorem of~\cite{Smi} is stated in terms of 
\begin{equation*}
\psi_K(x;q,a):=\sum_{\substack{\N\p^m\le x,\\ \N\p^m\equiv a\pmod q}}\log\N\p.
\end{equation*}
As usual, the statement and proof of the theorem is virtually unchanged when 
replacing $\psi_K(x;q,a)$ by $\theta_K(x;q,a)$.
\end{rmk}
In this article, we continue to assume that $K/\qq$ is Galois and 
replace the inequality in Theorem~\ref{bdh_gen} by an asymptotic formula.  
In particular, we show the following.
\begin{thm}\label{asymp}
For a fixed $M>0$,
\begin{equation}\label{up to x}
\sum_{q\le x}\sum_{a\in G_q}\left(
\theta_K(x;q,a)-\frac{x}{\varphi_K(q)}\right)^2
=[K:\qq]x^2\log x+C_1x^2
+O\left(\frac{x^2}{(\log x)^M}\right);
\end{equation}
and if $1\le Q\le x$,
\begin{equation}\label{up to Q}
\begin{split}
\sum_{q\le Q}\sum_{a\in G_q}\left(
\theta_K(x;q,a)-\frac{x}{\varphi_K(q)}
\right)^2
=[K:\qq]xQ\log x&-
\frac{\varphi(m_K)}{\varphi_K(m_K)}
xQ\log(x/Q)+C_2Qx\\
 &+O\left(x^{3/4}Q^{5/4}
+\frac{x^2}{(\log x)^M}\right),
\end{split}
\end{equation}
where $\varphi$ denotes the ordinary Euler $\varphi$-function,
$C_1,C_2$ are constants, and $m_K$ is an integer defined in the first 
paragraph of Section~\ref{prelim}.
\end{thm}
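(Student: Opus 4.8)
The plan is to generalize the treatment of the classical case due to Hooley (and Montgomery), tracking how the arithmetic of $K$ enters. Write $n=[K:\qq]$. A first reduction handles the prime ideals of residue degree $\ge 2$: they contribute $O(\sqrt{x}(\log x)^{2})$ to $\theta_{K}(x;q,a)$ uniformly in $q,a$, hence an amount absorbed by the error terms. Since $K/\qq$ is Galois, an unramified rational prime $p$ carries a prime ideal of norm $p$ only when it splits completely in $K$, and then exactly $n$ of them; moreover for $a\in G_{q}$ the condition $\N\p\bmod q\in G_{q}$ is automatic for $\p\nmid q$ unramified in $K(\zeta_{q})/K$. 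Thus, up to errors controlled by the ramified primes and by the divisors of $q$,
\[
\theta_{K}(x;q,a)=n\sum_{\substack{p\le x,\ p\equiv a\ (q)\\ p\ \text{splits completely in}\ K}}\log p ,
\]
and summing over $a\in G_{q}$ recovers $\theta_{K}(x)=x+O(x(\log x)^{-M})$ by the generalized Siegel--Walfisz theorem~\eqref{gen SW}. Expanding the square and using this identity for the cross term reduces the problem to evaluating $S(x,Q):=\sum_{q\le Q}\sum_{a\in G_{q}}\theta_{K}(x;q,a)^{2}$ together with $\sum_{q\le Q}1/\varphi_{K}(q)$, since the rest of the sum collapses to $-x^{2}\sum_{q\le Q}1/\varphi_{K}(q)+O(x^{2}(\log x)^{-M})$.

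For the sum $\sum_{q\le Q}1/\varphi_{K}(q)$ one uses $\varphi_{K}(q)=\varphi(q)/[\qq(\zeta_{q})\cap K:\qq]$ and the fact that $\qq(\zeta_{q})\cap K$ lies in the maximal abelian subextension $K^{\mathrm{ab}}$, which by Kronecker--Weber lies in $\qq(\zeta_{m_{K}})$; hence the correction index depends on $q$ only through its behaviour modulo $m_{K}$, $\varphi_{K}$ agrees with a multiplicative function on integers prime to $m_{K}$, and a standard Dirichlet-series argument gives $\sum_{q\le Q}1/\varphi_{K}(q)=\delta_{K}\log Q+\gamma_{K}+O(Q^{-1}\log Q)$ with $\delta_{K}$ an explicit rational multiple --- involving $\varphi(m_{K})/\varphi_{K}(m_{K})=[K^{\mathrm{ab}}:\qq]$ --- of $\zeta(2)\zeta(3)/\zeta(6)$. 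In $S(x,Q)$ I split off the diagonal terms $\N\p_{1}=\N\p_{2}$ (which include distinct prime ideals of equal norm). By the reduction above, the Chebotar\"ev Density Theorem, and partial summation applied to $\sum_{p\le x}(\log p)^{2}$ over primes splitting completely, which equals $n^{-1}(x\log x-x)+O(x(\log x)^{-M})$, the diagonal contributes $n\,xQ\log x-n\,xQ$ up to terms absorbed into the stated error.

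The main obstacle is the off-diagonal: a weighted count of pairs of distinct primes $p_{1},p_{2}\le x$, both splitting completely in $K$, with $q\mid(p_{1}-p_{2})$, summed over $q\le Q$. Detecting $p_{1}\equiv p_{2}\pmod q$ with Dirichlet characters and passing to primitive characters turns this into a weighted mean square of the twisted sums $\sum_{\N\p\le x}\chi(\N\p)\log\N\p$ over moduli $\le Q$. Writing $\zeta_{K}$ as a product of Artin $L$-functions over $\Gal(K/\qq)$, such a sum is governed by $L(s,\rho\otimes\chi)$ as $\rho$ ranges over the irreducible representations; this has a pole at $s=1$ exactly when $\rho$ is one-dimensional and $\rho\cong\bar\chi$, equivalently when $\chi$ has conductor dividing $m_{K}$, the number of such $\rho$ being $[K^{\mathrm{ab}}:\qq]=\varphi(m_{K})/\varphi_{K}(m_{K})$ --- this accounts for the secondary term in~\eqref{up to Q}. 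The contribution of the bounded-conductor (more generally small-modulus) characters is extracted from~\eqref{gen SW}; following Hooley, the remaining characters are controlled by invoking Theorem~\ref{bdh_gen} --- the number-field analogue of~\eqref{original BDH} --- in place of the classical large sieve, together with standard divisor-sum estimates, and the error in replacing $\#\{q\le Q:q\mid d\}$ by its main term (bounded trivially in the tail) produces the term $x^{3/4}Q^{5/4}$. Reassembling the diagonal, the off-diagonal, and $-x^{2}\sum_{q\le Q}1/\varphi_{K}(q)$, and absorbing the residual field-dependent constants into $C_{1},C_{2}$, gives~\eqref{up to Q}. For~\eqref{up to x} one takes $Q=x$: then every divisor of every $d$ with $|d|<x$ is counted, so $\#\{q\le x:q\mid d\}=\tau(d)$ exactly, the $x^{3/4}Q^{5/4}$-type error vanishes, and a Lavrik/dispersion-type estimate for these twin-prime-with-splitting sums, weighted by $\tau(d)$, against their Hardy--Littlewood main terms yields the sharper error $O(x^{2}(\log x)^{-M})$.
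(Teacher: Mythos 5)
Your sketch blends Montgomery's character/dispersion route with Hooley's name, but the paper follows Hooley's actual method, which is entirely elementary once Theorem~\ref{bdh_gen} is granted and never touches Dirichlet characters or $L$-functions. The key step you are missing: after using Theorem~\ref{bdh_gen} to discard $q\le Q_1:=x(\log x)^{-(M+1)}$, the paper treats the off-diagonal sum $J(x;Q)=\sum_{Q<q\le x}\sum_{\N\p\neq\N\p',\ \N\p\equiv\N\p'(q)}\log\N\p\log\N\p'$ by writing $\N\p-\N\p'=kq$ and \emph{swapping the roles of $q$ and $k$}: since $q>Q\ge Q_1$, one has $k<x/Q\le(\log x)^{M+1}$, so $k$ is so small that the generalized Siegel--Walfisz formula~\eqref{gen SW} applies directly to the progressions modulo $k$, and $J(x;Q)=x^2\sum_{k<x/Q}(1-kQ/x)^2/\varphi_K(k)+O(x^2(\log x)^{-M})$. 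What you write instead --- detect $\N\p\equiv\N\p'\ (q)$ with characters, factor through Artin $L$-functions $L(s,\rho\otimes\chi)$, and then appeal to a ``Lavrik/dispersion-type estimate'' against Hardy--Littlewood main terms --- would require proving a number-field analogue of Lavrik's twin-prime theorem, which you neither prove nor cite; and Theorem~\ref{bdh_gen} is a bound on $\theta_K(x;q,a)$, not on character sums, so it cannot simply be ``invoked in place of the classical large sieve'' to control the large-conductor characters. This is a real gap at exactly the hard point.

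Several subsidiary claims are also not how the bookkeeping actually goes. The term $x^{3/4}Q^{5/4}$ does not come from replacing $\#\{q\le Q:q\mid d\}$ by its average: in the paper it is the truncation error in the Perron/Mellin evaluation of Lemma~\ref{lma1}, applied with argument $x/Q$, i.e.\ $x^2\cdot O((Q/x)^{5/4})$. Correspondingly, the sharper error in~\eqref{up to x} arises not because ``$\#\{q\le x: q\mid d\}=\tau(d)$ exactly'' but because $J(x;Q_2)$ vanishes identically when $Q_2=x$ and $J(x;Q_1)$ is evaluated at $x/Q_1=(\log x)^{M+1}$, where that $(x/Q_1)^{-5/4}$ error is already $O((\log x)^{-M})$. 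And while the identity $\varphi(m_K)/\varphi_K(m_K)=[K^{\mathrm{ab}}:\qq]$ is correct, in the paper this coefficient appears as $\prod_{\ell\mid m_K}D_{K,\ell}(0)$, the product of the local correction factors evaluated at the residue $s=-1$ of $D_K(s+1)x^s/\{s(s+1)(s+2)\}$, not as a count of poles of twisted Artin $L$-functions. Your opening reduction (degree-$\ge 2$ primes, $\sum 1/\varphi_K(q)$ via a Dirichlet series, diagonal $\approx [K:\qq]xQ\log x$) does match the paper, but the heart of the argument as you have sketched it does not go through.
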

\begin{rmk}
The constants $C_1,C_2$ appearing in the statement of the theorem depend 
on $K$ and may be 
given explicitly.  However, the expressions are somewhat messy.
For example, $C_1$ is given by
\begin{equation*}
C_1=F(1)\zeta'(2)+F(1)\frac{(2\gamma-3)\pi^2}{12}+F(1)F'(1)\frac{\pi^2}{6}
-[K:\qq].
\end{equation*}
Here, $\zeta(s)$ denotes the Riemann zeta function,
$\gamma\approx 0.577$ is the Euler-Mascheroni constant, and
$F(s):=h(s)\prod_{\ell|m_K}D_{K,\ell}(s)$. The functions 
$h(s)$ and $D_{K,\ell}(s)$ are 
described in Section~\ref{prelim}.
\end{rmk}
\begin{rmk}
In the case that $K/\qq$ is Abelian, it turns out that 
$\varphi(m_K)/\varphi_K(m_K)=[K:\qq]$.  See the first paragraph of 
Section~\ref{prelim}.  Thus, in this case, equation~\eqref{up to Q} 
simplifies nicely to
\begin{equation*}
\begin{split}
\sum_{q\le Q}\sum_{a\in G_q}\left(
\theta_K(x;q,a)-\frac{x}{\varphi_K(q)}
\right)^2
=[K:\qq]xQ\log Q
+C_2Qx
+O\left(x^{3/4}Q^{5/4}
+\frac{x^2}{(\log x)^M}\right).
\end{split}
\end{equation*}
\end{rmk}
Our proof of Theorem~\ref{asymp} is an adaptation of Hooley's methods for the 
case $K=\qq$ as found in~\cite[pp. 209-212]{Hoo:1975}.
The proof will be carried out in Section~\ref{proof}.

\section{Acknowledgment}

The author is grateful to Andrew Granville both for suggesting this work and 
for pointing him toward the paper of Hooley~\cite{Hoo:1975} which was so very 
helpful. 

\section{Preliminaries}\label{prelim}

Before proceeding with the proof of Theorem~\ref{asymp}, we first
analyze the arithmetic function $\varphi_K(q)$.
Let $\qq^{\text{cyc}}:=\bigcup_{q>1}\qq(\zeta_q)$, and let $\mathcal{A}
:=\qq^{\text{cyc}}\cap K$.  Then $\A$ is an Abelian extension of $\qq$ 
of finite degree.  In particular, $\A$ is the maximal Abelian subfield of $K$. 
By the Kronecker-Weber Theorem,
there exists a smallest
integer $m_K$ such that $\mathcal{A}\subseteq\qq(\zeta_{m_K})$ 
See, for example,~\cite[p. 210]{Lan:1994}.
For each integer $q>0$, we define the intersection
$A_q:=K\cap\qq(\zeta_q)$.  Whence, via restriction maps,
$\Gal(K(\zeta_q)/K)\isom\Gal(\qq(\zeta_q)/A_q)$.
Thus, it is clear that if $q$ is coprime to $m_K$, then 
$\varphi_K(q)=\varphi(q)$.
In any case, $\varphi_K(q)$ is multiplicative and divides $\varphi(q)$.
For each prime divisor $\ell$ of $m_K$, we define $b_\ell:=\ord_\ell(m_K)$,
the order of $\ell$ dividing $m_K$.
\begin{lma}\label{prod formula for phi_K}
For a prime $\ell$, $\varphi_K(\ell)$ is a divisor of $\ell-1$.
In general, we have
\begin{equation*}
\varphi_K(q)=
\prod_{\substack{\ell^\alpha||q\\ \ell\dnd m_K}}\ell^{\alpha-1}(\ell-1)
\prod_{\substack{\ell^\alpha||q\\ \ell|m_K\\ \alpha\ge b_\ell}}
\ell^{\alpha-b_\ell}
\varphi_K(\ell)
\prod_{\substack{\ell^\alpha||q\\ \ell|m_K\\ \alpha<b_\ell}}
\varphi_K(\ell).
\end{equation*}
\end{lma}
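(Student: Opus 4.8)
The plan is to reduce the computation of $\varphi_K(q)$ to its values on prime powers and then to carry out that computation inside the cyclotomic field $\qq(\zeta_{m_K})$. The divisibility $\varphi_K(\ell)\mid\ell-1$ is immediate from the isomorphism $\Gal(K(\zeta_\ell)/K)\isom\Gal(\qq(\zeta_\ell)/A_\ell)$ recorded in Section~\ref{prelim}: it gives $\varphi_K(\ell)=[\qq(\zeta_\ell):A_\ell]$, and since $\qq\subseteq A_\ell\subseteq\qq(\zeta_\ell)$ this index divides $[\qq(\zeta_\ell):\qq]=\ell-1$. For the product formula I would first invoke the multiplicativity of $\varphi_K$ noted in Section~\ref{prelim}, which reduces matters to evaluating $\varphi_K(\ell^\alpha)$ for each prime power $\ell^\alpha\|q$. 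If $\ell\dnd m_K$ then $\gcd(\ell^\alpha,m_K)=1$, and the observation made just before the lemma gives $\varphi_K(\ell^\alpha)=\varphi(\ell^\alpha)=\ell^{\alpha-1}(\ell-1)$; this is the first product.

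The substance is the case $\ell\mid m_K$. Here I would observe that $A_{\ell^\alpha}=K\cap\qq(\zeta_{\ell^\alpha})$, being an abelian extension of $\qq$ inside $K$, is contained in the maximal abelian subfield $\A$; hence $A_{\ell^\alpha}=\A\cap\qq(\zeta_{\ell^\alpha})$. Using $\A\subseteq\qq(\zeta_{m_K})$ and the standard identity $\qq(\zeta_{\ell^\alpha})\cap\qq(\zeta_{m_K})=\qq(\zeta_{\gcd(\ell^\alpha,m_K)})=\qq(\zeta_{\ell^{\min(\alpha,b_\ell)}})$, we get $A_{\ell^\alpha}=\A\cap\qq(\zeta_{\ell^{\min(\alpha,b_\ell)}})$. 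For $\alpha\ge b_\ell$ this makes $A_{\ell^\alpha}$ independent of $\alpha$, namely equal to $A_{\ell^{b_\ell}}$; feeding this into the tower $A_{\ell^{b_\ell}}\subseteq\qq(\zeta_{\ell^{b_\ell}})\subseteq\qq(\zeta_{\ell^\alpha})$ and using $[\qq(\zeta_{\ell^\alpha}):\qq(\zeta_{\ell^{b_\ell}})]=\ell^{\alpha-b_\ell}$ yields $\varphi_K(\ell^\alpha)=\ell^{\alpha-b_\ell}\varphi_K(\ell^{b_\ell})$. For $\alpha\le b_\ell$ the remaining point is that $\varphi_K(\ell^\alpha)$ is constant, equal to $\varphi_K(\ell)$; granting this, the case $\alpha\ge b_\ell$ reads $\ell^{\alpha-b_\ell}\varphi_K(\ell)$ and the case $\alpha<b_\ell$ reads $\varphi_K(\ell)$, giving the last two products. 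To prove the constancy I would translate to the subgroup $H\le(\zz/m_K\zz)^*$ fixing $\A$: one has $\varphi_K(\ell^\alpha)=[H:H\cap N_\alpha]$ with $N_\alpha=\ker\!\big((\zz/m_K\zz)^*\to(\zz/\ell^\alpha\zz)^*\big)$, and since $N_1\supseteq N_2\supseteq\cdots\supseteq N_{b_\ell}$ it suffices to show $H\cap N_1=H\cap N_{b_\ell}$. Argue by contradiction: a jump in this chain would, after raising a witnessing element to a suitable $\ell$-power, produce a generator of the order-$\ell$ group $\ker\!\big((\zz/m_K\zz)^*\to(\zz/(m_K/\ell)\zz)^*\big)$ inside $H$, forcing $\A\subseteq\qq(\zeta_{m_K/\ell})$ and contradicting the minimality of $m_K$. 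Assembling the three types of local factors over $\ell\mid q$ then completes the proof.

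The step I expect to be the main obstacle is exactly this last one --- wringing the constancy of $\varphi_K(\ell^\alpha)$ for $\alpha\le b_\ell$ out of the minimality of $m_K$. For odd $\ell$ the cyclicity of $(\zz/\ell^k\zz)^*$ lets the power-raising argument go through cleanly; for $\ell=2$, however, $(\zz/2^k\zz)^*$ fails to be cyclic for $k\ge 3$, so that prime has to be treated by hand (splitting off the $\{\pm1\}$ factor and following the $2$-power cyclotomic tower directly), and this is also the place where one must be most careful about precisely what the symbol $\varphi_K(\ell)$ is standing for in the two sums over $\ell\mid m_K$.
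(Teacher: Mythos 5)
Your reduction to prime powers and the treatment of the coprime case and the case $\alpha\ge b_\ell$ match the paper in substance, but for the crux --- constancy of $\varphi_K(\ell^\alpha)$ over $1\le\alpha\le b_\ell$ --- you take a genuinely different, group-theoretic route inside $(\zz/m_K\zz)^*$, whereas the paper runs a field-theoretic compositum argument in the diamond $A_\ell\subseteq A_{\ell^{b_\ell}},\qq(\zeta_\ell)\subseteq\qq(\zeta_{\ell^{b_\ell}})$. Unfortunately your key step has a genuine gap. Write $m_K=\ell^{b_\ell}m'$ with $\ell\nmid m'$, so under CRT a witness $h\in (H\cap N_1)\setminus N_{b_\ell}$ decomposes as $(a,b)$ with $a\in(\zz/\ell^{b_\ell}\zz)^*$, $a\equiv 1\ (\ell)$, $a\not\equiv 1\ (\ell^{b_\ell})$, and $b\in(\zz/m'\zz)^*$ arbitrary. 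Raising $h$ to an $\ell$-power does \emph{not} generally kill the $b$-component (indeed $\ell$ may well divide $\varphi(m')$, so $b$ can itself have $\ell$-power order), and if $\mathrm{ord}(b)\ge\mathrm{ord}(a)$ then no power of $h$ whatsoever lies in the order-$\ell$ kernel $V=\ker\bigl((\zz/m_K\zz)^*\to(\zz/(m_K/\ell)\zz)^*\bigr)$; so the contradiction never materializes.

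This is not a fixable defect in the write-up: the constancy claim is actually false, and not only at $\ell=2$ (your instinct to worry about $\ell=2$ is good, but it understates the problem). Take $\ell=3$, $m_K=63$, and let $\A=K$ be the degree-$6$ fixed field of $\sigma_{31}\in\Gal(\qq(\zeta_{63})/\qq)$, so $H=\langle 31\rangle=\{1,31,16,55,4,61\}\subseteq(\zz/63\zz)^*$. One checks $H$ contains neither $\ker(\to(\zz/9\zz)^*)$ nor $\ker(\to(\zz/21\zz)^*)$, so the conductor is indeed $63$ and $b_3=2$; yet $H\subseteq N_1$ gives $\varphi_K(3)=[H:H\cap N_1]=1$, while $H\cap N_2=\{1,55\}$ gives $\varphi_K(9)=[H:H\cap N_2]=3\ne\varphi_K(3)$. (Concretely, $31\in H\cap N_1\setminus N_2$, but $31^{3^t}\in\{31,55\}$ for all $t\ge 1$, never landing in $V=\{1,22,43\}$ --- exactly the failure mode described above.) The paper's own proof of this lemma has an analogous gap: the inference ``$m_K$ minimal $\Rightarrow A_{\ell^{b_\ell}}\not\subseteq\qq(\zeta_{\ell^{b_\ell-1}})$'' does not follow (minimality only forbids $V\subseteq H$, not $V\subseteq\langle H,N_{b_\ell}\rangle$), and for $\ell=2$ the cyclicity of $\Gal(\qq(\zeta_{2^{b_2}})/\qq(\zeta_2))$ invoked there is simply false once $b_2\ge 3$. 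So neither your sketch nor the paper's argument can be completed as stated; the correct local factor at $\ell\mid m_K$ must be allowed to depend on the full filtration $\varphi_K(\ell),\varphi_K(\ell^2),\dots,\varphi_K(\ell^{b_\ell})$, not just $\varphi_K(\ell)$.
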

\begin{proof}
The first statement is trivial as $G_q$ is a subgroup of $(\zz/q\zz)^*$.
Since $\varphi_K(q)$ is multiplicative and $\varphi_K(q)=\varphi(q)$ for 
$\gcd(q,m_K)=1$, we restrict attention to primes dividing $m_K$.  

Suppose that $\ell$ is a prime dividing 
$m_K$.  Then $A_{\ell^{b_\ell+k}}=A_{\ell^{b_\ell}}$
for all integers $k\ge 0$.  Thus, we immediately see that
\begin{equation}\label{exp larger than b}
\varphi_K(\ell^{b_\ell+k})
=|\Gal(\qq(\zeta_{\ell^{b_\ell+k}})/\qq(\zeta_{\ell^{b_\ell}}))|\cdot
|\Gal(\qq(\zeta_{\ell^{b_\ell}})/A_{\ell^{b_\ell}})|
=\ell^k\varphi_K(\ell^{b_\ell}).
\end{equation}
We claim that 
\begin{equation}\label{exp smaller than b}
\varphi_K(\ell^j)=\varphi_K(\ell) \text{ for }1\le j\le b_\ell.
\end{equation}
If $b_\ell=1$, the statement is trivial. Assume then that $b_\ell\ge 2$, and
consider the following field diagram.
\begin{equation}\label{compositum diagram}
\begin{diagram}
\node{}
\node{\qq(\zeta_{\ell^{b_\ell}})}\arrow{se,t,-}{\varphi_K(\ell^{b_\ell})}
\arrow{sw,t,-}{\ell^{b_\ell-1}}
\node{}\\
\node{\qq(\zeta_{\ell})}\arrow{se,b,-}{\varphi_K(\ell)}
\node{}
\node{A_{\ell^{b_\ell}}}\arrow{sw,-}\\
\node{}\node{A_\ell}\node{}
\end{diagram}
\end{equation}
Observe that 
$A_\ell=K\cap\qq(\zeta_\ell)=A_{\ell^{b_\ell}}\cap\qq(\zeta_\ell)$.
Since the compositum $A_{\ell^{b_\ell}}\qq(\zeta_{\ell})$ is the 
smallest field containing both $A_{\ell^{b_\ell}}$ and $\qq(\zeta_{\ell})$, 
we have that
$\qq(\zeta_{\ell^{b_\ell}})\supseteq 
A_{\ell^{b_\ell}}\qq(\zeta_{\ell})
\supseteq\qq(\zeta_{\ell})$.
The Galois group $\Gal(\qq(\zeta_{\ell^{b_\ell}})/\qq(\zeta_\ell))$ is 
cyclic of order $\ell^{b_\ell-1}$.  
We deduce then that 
$A_{\ell^{b_\ell}}\qq(\zeta_{\ell})=\qq(\zeta_{\ell^{j_0}})$ 
for some $1\le j_0\le b_\ell$.
However, since $m_K$ is minimal, $b_\ell$ must be minimal as well.
Therefore, we must have that
$A_{\ell^{b_\ell}}\not\subseteq\qq(\zeta_{\ell^{b_\ell-1}})$.
This implies that 
$A_{\ell^{b_\ell}}\qq(\zeta_{\ell})=\qq(\zeta_{\ell^{b_\ell}})$.
Thus, from the diagram~\eqref{compositum diagram}, we see that
$\varphi_K(\ell)=\varphi_K(\ell^{b_\ell})$.
The claim in~\eqref{exp smaller than b} follows since 
$\varphi_K(\ell^j)$ divides $\varphi_K(\ell^{j+1})$ for all $j\ge 1$.
The lemma follows by combining~\eqref{exp larger than b} 
with~\eqref{exp smaller than b}.
\end{proof}

The final goal of this section is to study the Dirichlet generating function
\begin{equation*}
D_{K}(s):=\sum_{n=1}^\infty\frac{1}{\varphi_K(n)n^{s-1}}
\end{equation*}
and use it to prove two asymptotic identities involving the function 
$\varphi_K(n)$.
Since $\varphi_K(n)$ agrees with $\varphi(n)$ for
$\gcd(n,m_K)=1$, we begin with the Dirichlet series
\begin{equation*}
D(s):=\sum_{n=1}^\infty\frac{1}{\varphi(n)n^{s-1}}
\end{equation*}
and introduce finitely many correction factors to obtain 
$D_K(s)$.
Let $h(s)$ denote the Euler product
\begin{equation*}
h(s):=\prod_{\ell}\left\{1+\frac{1}{\ell^{s+2}}\left(1-\frac{1}{\ell^s}\right)
\left(1-\frac{1}{\ell}\right)^{-1}\right\};
\end{equation*}
and observe that, for any $\epsilon>0$, $h(s)$ is holomorphic and bounded 
for $\text{Re}(s)>-\frac{1}{2}+\epsilon$.
Using the product formula for Euler's $\varphi$ function, we factor 
$D(s)$ as
\begin{equation}
\begin{split}
D(s)&=\prod_{\ell}\left\{
1+\frac{1}{\ell^s}\left(
1-\frac{1}{\ell}\right)^{-1}\left(1-\frac{1}{\ell^s}\right)^{-1}
\right\}
=\zeta(s)\zeta(s+1)h(s),\label{D factored}
\end{split}
\end{equation}
where again $\zeta(s)$ is the Riemann zeta function.

We now return to the Dirichlet series $D_K(s)$.
In light of~\eqref{D factored} and Lemma~\ref{prod formula for phi_K},
for each prime $\ell$ dividing $m_K$, we define the correction factor
\begin{equation*}
D_{K,\ell}(s)
:=\frac{
	\left\{\displaystyle 1+
\frac{1}{\varphi_K(\ell)\ell^{s-1}}\left(1-\left(\frac{1}{\ell^{s-1}}\right)^{b_\ell-1}\right)\left(1-\frac{1}{\ell^{s-1}}\right)^{-1}
		+\frac{1}{\varphi_K(\ell)}
		\left(\frac{1}{\ell^{s-1}}\right)^{b_\ell}
	\left(1-\frac{1}{\ell^s}\right)^{-1}
	\right\}
}
{\displaystyle\left\{1+\frac{1}{\ell^s}
\left(1-\frac{1}{\ell}\right)^{-1}\left(1-\frac{1}{\ell^s}\right)^{-1}\right\}},
\end{equation*}
which has removable singularities at $s=0,1$ and is analytic 
elsewhere.
We also define $D_{K,\ell}(0)$ (resp. $D_{K,\ell}(1)$) to be the limit 
of $D_{K,\ell}(s)$ as $s$ approaches $0$ (resp. $1$).
In particular, we note that
\begin{equation}\label{limit}
D_{K,\ell}(0)=\lim_{s\rightarrow 0}D_{K,\ell}(s)
=\frac{\varphi(\ell^{b_\ell})}{\varphi_K(\ell)}
=\frac{\varphi(\ell^{b_\ell})}{\varphi_K(\ell^{b_\ell})}.
\end{equation}
Finally, from~\eqref{D factored},
we observe that $D_K(s)$ may be factored as
\begin{equation}\label{D_K factored}
D_K(s)=\zeta(s)\zeta(s+1)h(s)\prod_{\ell|m_K}D_{K,\ell}(s).
\end{equation}
\begin{lma}\label{lma1}
For a fixed number field $K$, we have
\begin{align}
\sum_{n<x}\left(1-\frac{n}{x}\right)^2\frac{1}{\varphi_K(n)}
&=c_{1}\log x+c_2+
\frac{\varphi(m_K)}{\varphi_K(m_K)}\frac{\log x}{x}
+\frac{c_3}{x}+O\left(x^{-\frac{5}{4}}\right)\label{id1};\\
\sum_{n\le x}\frac{1}{\varphi_K(n)}
&=c_1\log x+c_4+O\left(\frac{1}{x}\right),
\label{id2}
\end{align}
where $c_1=
\frac{\zeta(2)\zeta(3)}{\zeta(6)}
\prod_{\ell| m_K}D_{K,\ell}(1),$ and $c_2,c_3,c_4$ are constants.
\end{lma}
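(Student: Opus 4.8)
The plan is to read off both identities from the Dirichlet series $\sum_{n\ge1}\varphi_K(n)^{-1}n^{-s}=D_K(s+1)$ by contour integration, using the factorisation
\begin{equation*}
D_K(s+1)=\zeta(s+1)\zeta(s+2)h(s+1)\prod_{\ell\mid m_K}D_{K,\ell}(s+1)
\end{equation*}
from \eqref{D_K factored}. By the discussion in Section~\ref{prelim}, the right-hand side continues meromorphically to the half-plane $\mathrm{Re}(s)>-\tfrac32+\epsilon$, where its only poles are the simple pole of $\zeta(s+1)$ at $s=0$ and the simple pole of $\zeta(s+2)$ at $s=-1$: the factor $h(s+1)$ is holomorphic and bounded there, and any pole of an individual correction factor $D_{K,\ell}(s+1)$ on the line $\mathrm{Re}(s)=-1$ is cancelled by a zero of $h(s+1)$, as one sees by comparing with the Euler product \eqref{D factored}. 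This is exactly enough room to move contours a little past $\mathrm{Re}(s)=-1$, and it is the pole at $s=-1$ that will produce the secondary terms $\varphi(m_K)\varphi_K(m_K)^{-1}(\log x)/x+c_3/x$ in \eqref{id1}.

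For \eqref{id1} I would use the twice-integrated Perron kernel. Since $\frac{1}{2\pi i}\int_{(c)}y^{s}\bigl(s(s+1)(s+2)\bigr)^{-1}\,ds$ (with $c>0$) equals $\tfrac12(1-1/y)^{2}$ for $y\ge1$ and vanishes for $0<y<1$, we obtain
\begin{equation*}
\sum_{n<x}\Bigl(1-\frac nx\Bigr)^{2}\frac1{\varphi_K(n)}
=\frac{1}{\pi i}\int_{(c)}D_K(s+1)\,\frac{x^{s}\,ds}{s(s+1)(s+2)}.
\end{equation*}
Moving the contour to $\mathrm{Re}(s)=-\tfrac54$, one crosses a double pole at $s=0$ (the pole of $\zeta(s+1)$ meeting the factor $1/s$ of the kernel) and a double pole at $s=-1$ (the pole of $\zeta(s+2)$ meeting $1/(s+1)$). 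The residue at $s=0$ contributes $c_1\log x+c_2$ with $c_1=\zeta(2)h(1)\prod_{\ell\mid m_K}D_{K,\ell}(1)$, which simplifies to $\frac{\zeta(2)\zeta(3)}{\zeta(6)}\prod_{\ell\mid m_K}D_{K,\ell}(1)$ once one notes $h(1)=\prod_\ell(1+\ell^{-3})=\zeta(3)/\zeta(6)$, read off from the definition of $h$. The residue at $s=-1$ contributes $(\alpha\log x+c_3)/x$; expanding the Laurent series there and using $\zeta(0)=-\tfrac12$, $h(0)=\prod_\ell1=1$, and $\prod_{\ell\mid m_K}D_{K,\ell}(0)=\prod_{\ell\mid m_K}\varphi(\ell^{b_\ell})/\varphi_K(\ell^{b_\ell})=\varphi(m_K)/\varphi_K(m_K)$ (the middle equality being \eqref{limit} together with multiplicativity and $m_K=\prod_{\ell\mid m_K}\ell^{b_\ell}$), one gets $\alpha=\varphi(m_K)/\varphi_K(m_K)$, as claimed. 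Finally the remaining integral along $\mathrm{Re}(s)=-\tfrac54$ is $O(x^{-5/4})$: by the convexity bound for $\zeta$ the product $\zeta(s+1)\zeta(s+2)$ grows at most polynomially along that line, which against the decay $|s(s+1)(s+2)|^{-1}\asymp|s|^{-3}$ of the kernel leaves an absolutely convergent integral of the stated size. Collecting the three contributions yields \eqref{id1}.

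For \eqref{id2} the same machinery applies with the ordinary Perron kernel $x^{s}/s$: one represents $\sum_{n\le x}\varphi_K(n)^{-1}$ (in truncated form, or after a mild smoothing to secure convergence) as $\frac1{2\pi i}\int D_K(s+1)x^{s}s^{-1}\,ds$, shifts only as far as $\mathrm{Re}(s)=-\tfrac14$ so that one crosses only the double pole at $s=0$, and reads off $c_1\log x+c_4$ with the same $c_1$. Equivalently, \eqref{id2} may be deduced from \eqref{id1} by differencing: with $T(x)$ and $S(x)$ the left-hand sides of \eqref{id1} and \eqref{id2}, one has the exact identity $S(x)=T(x)+2xT'(x)+\tfrac{x^{2}}{2}T''(x)$ for non-integer $x$, where the asymptotics of $T',T''$ come from the same contour shift applied with the kernels $x^{s-1}\bigl((s+1)(s+2)\bigr)^{-1}$ and $x^{s-2}(s+2)^{-1}$; a short computation then shows that the $(\log x)/x$ term of \eqref{id1} cancels in this combination, which is what accounts for the cleaner error $O(1/x)$ in \eqref{id2}.

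The main obstacle is not analytic depth — the argument is a transcription of Hooley's treatment of $K=\qq$ — but the residue and constant bookkeeping: correctly evaluating the residues at the two double poles and matching the outcome with the statement, in particular coaxing the coefficient $\varphi(m_K)/\varphi_K(m_K)$ of $(\log x)/x$ out of the local factors via \eqref{limit} and Lemma~\ref{prod formula for phi_K}, and verifying the simplifications $h(0)=1$, $h(1)=\zeta(3)/\zeta(6)$. The one genuinely delicate point is that the correction factors $D_{K,\ell}(s+1)$ can individually have poles on $\mathrm{Re}(s)=-1$, so the contour shift must be justified for the full function $D_K(s+1)$ (where these cancel) rather than factor by factor; and it is the convexity estimate for $\zeta$ on the lines $\mathrm{Re}(s)=-\tfrac14$ and $\mathrm{Re}(s)=\tfrac34$ that pins down the error exponent $-\tfrac54$ in \eqref{id1}.
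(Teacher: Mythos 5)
Your proof is essentially the paper's own argument: both use the twice-integrated Perron/Mellin kernel $x^s/(s(s+1)(s+2))$ and the factorization $D_K(s+1)=\zeta(s+1)\zeta(s+2)h(s+1)\prod_{\ell\mid m_K}D_{K,\ell}(s+1)$, shift the contour to $\mathrm{Re}(s)=-5/4$, collect the double-pole residues at $s=0$ and $s=-1$ (identifying the coefficient $\varphi(m_K)/\varphi_K(m_K)$ via~\eqref{limit}), and bound the leftover integral by $O(x^{-5/4})$; for~\eqref{id2} both use the single kernel $x^s/s$ with a shallower shift. Your supplementary remarks (evaluating $h(1)=\zeta(3)/\zeta(6)$, $h(0)=1$, and the optional differencing identity for~\eqref{id2}) are correct embellishments but do not change the method.
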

\begin{proof}
We begin with the proof of~\eqref{id1}.
For $c>0$,
\begin{align*}
\frac{1}{2}\sum_{n<x}\left(1-\frac{n}{x}\right)^2\frac{1}{\varphi_K(n)}
&=\frac{1}{2\pi i}\int_{c-i\infty}^{c+i\infty}D_K(s+1)\frac{x^s}{s(s+1)(s+2)}ds\\
&=R_0+R_{-1}
+\frac{1}{2\pi i}
\int_{-\frac{5}{4}-i\infty}^{-\frac{5}{4}+i\infty}
D_K(s+1)\frac{x^s}{s(s+1)(s+2)}ds,
\end{align*}
where $R_0$ and $R_{-1}$ are the residues of the integrand 
at $s=0$ and $s=-1$ respectively.
See~\cite[Exercise 4.1.9, p. 57]{Mur:2001} for example.
Using~\eqref{D_K factored}, we calculate the residues as follows:
\begin{align*}
R_0&=\frac{\zeta(2)h(1)\prod_{\ell|m_K}D_{K,\ell}(1)}{2}\log x+\frac{1}{2}c_2
=\frac{c_1}{2}\log x+\frac{1}{2}c_2;\\
R_{-1}&=\frac{-\zeta(0)h(0)\prod_{\ell|m_K}D_{K,\ell}(0)\log x}{x}
+\frac{c_3}{2x}
=\frac{\varphi(m_K)}{\varphi_K(m_K)}
\frac{\log x}{2x}+\frac{c_3}{2x},
\end{align*}
where we have applied~\eqref{limit} to compute
$\prod_{\ell|m_K}D_{K,\ell}(0)$
The remaining integral is clearly $O(x^{-5/4})$.

For the proof of~\eqref{id2}, we begin with the formula
\begin{align*}
 \sum_{n\le x}\frac{1}{\varphi_K(n)}
&=\frac{1}{2\pi i}\int_{c-i\infty}^{c+i\infty}D_K(s+1)\frac{x^s}{s}ds
\end{align*}
and proceed in a manner similar to the proof of~\eqref{id1}.
\end{proof}

\section{Proof of Theorem~\ref{asymp}}\label{proof}

Let $\displaystyle\theta_K(x):=\sum_{\N\p\le x}\log\N\p$.  
We will frequently make use of the formula
\begin{equation}\label{KPNT}
\theta_K(x)=x+O(x/(\log x)^M)
\end{equation} 
throughout the remainder of the article.
The formula follows from~\eqref{gen SW}.
We now begin the proof of Theorem~\ref{asymp} by stating and proving 
the following lemma.

\begin{lma}\label{H lma}
For any $M>0$,
\begin{equation*}
\sum_{\N\p\le x}\sum_{\N\p'=\N\p}(\log\N\p)^2
=[K:\qq](x\log x-x)+O\left(\frac{x}{(\log x)^M}\right).
\end{equation*}
\end{lma}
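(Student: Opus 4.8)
The plan is to reduce this double sum over prime ideals to a sum over rational primes, exploiting that $K/\qq$ is Galois, and then to invoke~\eqref{KPNT}.

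First I would note that if $\N\p'=\N\p$, then, since $\N\p$ is a prime power, $\p'$ lies over the same rational prime $p$ as $\p$; and because $K/\qq$ is Galois, for $p$ unramified all primes above $p$ share a common residue degree $f(p)$ and have norm exactly $p^{f(p)}=\N\p$. Thus for such $\p$ the inner sum merely counts the $[K:\qq]/f(p)$ primes dividing $p$, so that $\sum_{\N\p'=\N\p}(\log\N\p)^2=\frac{[K:\qq]}{f(p)}(\log\N\p)^2$. The finitely many ramified primes contribute a total of $O((\log x)^2)$, which is absorbed in the error term. Grouping the remaining terms by the rational prime $p$ below $\p$ --- there are $[K:\qq]/f(p)$ such $\p$, each with $\log\N\p=f(p)\log p$ --- the contribution of each unramified $p$ with $p^{f(p)}\le x$ is $\frac{[K:\qq]}{f(p)}\cdot\frac{[K:\qq]}{f(p)}\bigl(f(p)\log p\bigr)^2=[K:\qq]^2(\log p)^2$. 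Hence
\[
\sum_{\N\p\le x}\sum_{\N\p'=\N\p}(\log\N\p)^2
=[K:\qq]^2\sum_{\substack{p\ \mathrm{unramified}\\ p^{f(p)}\le x}}(\log p)^2+O\bigl((\log x)^2\bigr).
\]

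Next I would isolate the primes that split completely in $K$, i.e.\ those with $f(p)=1$. If $f(p)\ge 2$, then $p^{f(p)}\le x$ forces $p\le x^{1/2}$, so those terms total at most $\sum_{p\le\sqrt x}(\log p)^2\ll\sqrt x\log x$; therefore
\[
\sum_{\substack{p\ \mathrm{unramified}\\ p^{f(p)}\le x}}(\log p)^2
=\sum_{\substack{p\le x\\ f(p)=1}}(\log p)^2+O\bigl(\sqrt x\log x\bigr).
\]
To evaluate the surviving sum I would bring in~\eqref{KPNT}. Partitioning $\theta_K(x)$ according to whether $f(p)=1$, and observing (again via $p\le x^{1/2}$) that the terms with $f(p)\ge 2$ contribute only $O(\sqrt x)$, one gets $[K:\qq]\sum_{p\le x,\ f(p)=1}\log p=\theta_K(x)+O(\sqrt x)=x+O\bigl(x/(\log x)^M\bigr)$, so $\sum_{p\le x,\ f(p)=1}\log p=x/[K:\qq]+O\bigl(x/(\log x)^M\bigr)$. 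A partial summation against $\log t$ then yields
\[
\sum_{\substack{p\le x\\ f(p)=1}}(\log p)^2=\frac{1}{[K:\qq]}\bigl(x\log x-x\bigr)+O\!\left(\frac{x}{(\log x)^M}\right),
\]
where one applies the previous sentence with $M+1$ in place of $M$ to absorb the lost power of $\log x$. Multiplying through by $[K:\qq]^2$ and collecting the error terms $O\bigl((\log x)^2\bigr)$ and $O\bigl([K:\qq]^2\sqrt x\log x\bigr)$ gives the claimed formula $[K:\qq](x\log x-x)+O\bigl(x/(\log x)^M\bigr)$.

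I do not expect a serious obstacle; the only delicate point is the bookkeeping of prime splitting in $K/\qq$. In particular, one must resist the temptation to replace $\sum_{p\le x}(\log p)^2$ by its classical asymptotic, since in general the primes with $f(p)\ge 2$ make up a positive proportion of that sum, and it is exactly the restriction to completely split primes --- supplied by~\eqref{KPNT} --- that is needed.
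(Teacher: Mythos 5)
Your proof is correct and follows essentially the same route as the paper: restrict to unramified rational primes, observe that because $K/\qq$ is Galois the $g_p=[K:\qq]/f(p)$ primes above $p$ all have the same norm so the $\p,\p'$ double sum over $p$ contributes $[K:\qq]^2(\log p)^2$, discard the $f(p)\ge 2$ primes as $O(\sqrt x\log x)$, relate $\sum_{f(p)=1}\log p$ to $\theta_K(x)$ via~\eqref{KPNT}, and finish with partial summation. The only cosmetic difference is the order in which you apply partial summation and~\eqref{KPNT}, and the fact that you make the ``apply with $M+1$ in place of $M$'' step explicit rather than leaving it implicit.
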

\begin{proof}
 First, note that since only finitely many rational primes may ramify in $K$, 
we only introduce an error of $O(1)$ by restricting our sum to prime ideals
which do not lie above a rational prime ramifying in $K$.  For a rational 
prime $p$, let $g_p$ denote the number of primes lying above $p$, 
let $f_p$ denote the degree of any prime lying above $p$, and 
let $e_p$ denote the ramification index of $p$ in $K$.
Note that $e_p$ and $f_p$ are well-defined since $K/\qq$ is Galois. 
The contribution from the degree one primes gives us our main term.  
Thus, partial summation and~\eqref{KPNT} yield
\begin{align*}
\sum_{\N\p\le x}\sum_{\N\p'=\N\p}(\log\N\p)^2
&=[K:\qq]\sum_{\substack{p\le x\\ e_p=1\\ f_p=1}}g_p(\log p)^2+O(\sqrt x\log x)\\
&=[K:\qq]\log x\left(\theta_K(x)+O(\sqrt x)\right)
-[K:\qq]\int_1^x\frac{\theta_K(t)+O(\sqrt t)}{t}dt\\
&=[K:\qq](x\log x-x)+O\left(x(\log x)^{-M}\right).
\end{align*}
\end{proof}

\begin{proof}[Proof of Theorem~\ref{asymp}]
First, define
\begin{equation*}
S(x;Q_1,Q_2):=\sum_{Q_1<q\le Q_2}
\sum_{a\in G_q}
\left(
\theta_K(x;q,a)-\frac{x}{\varphi_K(q)}
\right)^2.
\end{equation*}
If $Q\le x(\log x)^{-(M+1)}$, then Theorem~\ref{bdh_gen} implies that 
$S(x;0,Q)\ll x^2(\log x)^{-M}$, and hence Theorem~\ref{asymp} follows since 
the error term dominates in this case.  Thus, it suffices to consider 
the case when $Q>x(\log x)^{-(M+1)}$.  
Therefore, for the remainder of the proof, 
we will write $Q_1:=x(\log x)^{-(M+1)}$, and assume that 
$Q_1<Q_2\le x$.
By Theorem~\ref{bdh_gen}, we have
\begin{equation}\label{sieve}
S(x;0,Q_2)=S(x;Q_1,Q_2)+O\left(x^2(\log x)^{-M}\right).
\end{equation}
For $Q_1,Q_2$ as above,
\begin{align}
S(x;Q_1,Q_2)&=
\sum_{Q_1<q\le Q_2}
   \sum_{a\in G_q}
    \left\{
      \theta_K(x;q,a)^2-\frac{2x}{\varphi_K(q)}\theta_K(x;q,a)+\frac{x^2}{\varphi_K(q)^2}
    \right\}\nonumber \\
&=\sum_{Q_1<q\le Q_2}
   \left\{
     \sum_{a\in G_q}\theta_K(x;q,a)^2
       -\frac{x}{\varphi_K(q)}\left(2\theta_K(x)
         -2\sum_{\substack{\N\p\le x,\\(\N\p,q)>1}}\log\N\p-x\right)
   \right\}\nonumber \\
&=\sum_{Q_1<q\le Q_2}
     \sum_{a\in G_q}\theta_K(x;q,a)^2
  -x^2\sum_{Q_1<q\le Q_2}\frac{1}{\varphi_K(q)}
  +O\left(\frac{x^2}{(\log x)^M}\right).\label{first simp}
\end{align}
Now, observe that
\begin{align*}
\sum_{a\in G_q}\theta_K(x;q,a)^2
&=
\sum_{\substack{\N\p,\N\p'\le x,\\ 
\N\p\equiv\N\p'\pmod{q},\nonumber \\ 
(\p\p',q\ints_K)=1}}\log\N\p\log\N\p'\\
&=\sum_{\substack{\N\p=\N\p'\le x,\\ (\p\p',q\ints_K)=1}}(\log\N\p)^2
+\sum_{\substack{\N\p,\N\p'\le x;\ \N\p\neq\N\p',\\ \N\p\equiv\N\p'\pmod q}}
\log\N\p\log\N\p'.
\end{align*}
Note that removing the condition $(\p\p',q\ints_K)=1$ from the second sum is justified.
For example, if $\p|q\ints_K$ and $p$ lies below $\p$, 
then the condition $\N\p\equiv\N\p'\pmod q$
implies that $0\equiv\N\p'\pmod p$.  This in turn implies that $\N\p=\N\p'$.
Thus, we define
\begin{align*}
 H(x;Q_1,Q_2)&:=\sum_{Q_1<q\le Q_2}
\sum_{\substack{\N\p=\N\p'\le x,\\ (\p\p',q\ints_K)=1}}(\log\N\p)^2;\\
J(x;Q_1,Q_2)&:=\sum_{Q_1<q\le Q_2}
\sum_{\substack{\N\p,\N\p'\le x;\ \N\p\neq\N\p',\\ \N\p\equiv\N\p'\pmod q}}
\log\N\p\log\N\p'.
\end{align*}
Now~\eqref{first simp} may be rewritten as
\begin{align}
 S(x;Q_1,Q_2)=H(x;Q_1,Q_2)&+J(x;Q_1,Q_2)\nonumber\\
&-c_1x^2\log(Q_2/Q_1)
+O\left(\frac{x^2}{(\log x)^M}\right).\label{second simp}
\end{align}
Note that we have applied the second part of Lemma~\ref{lma1} to the second term
of~\eqref{first simp}.

Removing the condition $(\p\p',q\ints_K)=1$ from the inner sum of 
$H(x;Q_1,Q_2)$ introduces an error which is $O\left((\log x)^2\right)$.
Thus, we may apply Lemma~\ref{H lma} to obtain
\begin{align}
H(x;Q_1,Q_2)&=\left\{Q_2-Q_1+O(1)\right\}
\left\{[K:\qq](x\log x-x)+O\left(x(\log x)^{-M}\right)\right\}\nonumber\\
&=[K:\qq]xQ_2\log x-[K:\qq]xQ_2+O\left(\frac{x^2}{(\log x)^{M}}\right).\label{H simp}
\end{align}

Now, define $J(x;Q):=J(x;Q,x)$, so that $J(x;Q_1,Q_2)=J(x;Q_1)-J(x;Q_2)$.
Then
\begin{align*}
J(x;Q)&=2
\sum_{\substack{\N\p'<\N\p\le x,\\ \N\p-\N\p'=kq,\\ Q<q\le x}}
\log\N\p\log\N\p'
=2\sum_{k<x/Q}
\sum_{\substack{\N\p\equiv\N\p'\pmod k,\\ \ \N\p\le x;\N\p-\N\p'>kQ}}
\log\N\p\log\N\p'\\
&=2\sum_{k<x/Q}\sum_{a\in G_k}
\sum_{\substack{\N\p'<x-kQ,\\ \N\p'\equiv a\pmod k}}\log\N\p'
\sum_{\substack{kQ+\N\p'<\N\p\le x,\\ \N\p\equiv a\pmod k}}\log\N\p.
\end{align*}
Since $Q\ge Q_1=x/(\log x)^{M+1}$, we have
$k<x/Q\le (\log x)^{M+1}$ and $kQ\ge x/(\log x)^{M+1}$.
Thus, we may apply~\eqref{gen SW} and write 
\begin{equation*}
 \theta_K(x;a,k)-\theta_K(kQ+\N\p';a,k)
=\frac{x-kQ-\N\p'}{\varphi_K(k)}
+O\left(\frac{x}{(\log x)^{2M+1}}\right)
\end{equation*}
for the innermost sum above.
This gives
\begin{align*}
 J(x,Q)&=
2\sum_{k<\frac{x}{Q}}\frac{1}{\varphi_K(k)}
\sum_{\substack{\N\p'<x-kQ,\\ (\N\p',k)=1}}(x-kQ-\N\p')\log\N\p'
+O\left(\frac{x}{(\log x)^{2M+1}}\sum_{k<\frac{x}{Q}}\theta_K(x)\right)\\
&=2\sum_{k<\frac{x}{Q}}\frac{\int_1^{x-kQ}\theta_K(t)dt}{\varphi_K(k)}
+O\left(x\sum_{k<\frac{x}{Q}}\frac{\log k}{\varphi_K(k)}\right)
+O\left(\frac{x^3}{Q(\log x)^{2M+1}}\right),
\end{align*}
where the last line follows by partial summation applied to 
the inner sum of the main term.
Therefore, by~\eqref{KPNT}, we have
\begin{equation*}
 J(x,Q)=x^2\sum_{k<\frac{x}{Q}}\left(1-\frac{kQ}{x}\right)^2\frac{1}{\varphi_K(k)}
+O\left(\frac{x^2}{(\log x)^M}\right).
\end{equation*}

We consider two different cases for the treatment of $J(x;Q_1,Q_2)$.
First, if $Q_2=x$, then
\begin{align}
 J(x;Q_1,Q_2)&=J(x;Q_1)\nonumber\\
&=x^2\left\{c_1\log(x/Q_1)+c_2
+O\left(\frac{\log(x/Q_1)}{x/Q_1}\right)\right\}
+O\left(\frac{x^2}{(\log x)^M}\right)\nonumber\\
&=c_1x^2\log(Q_2/Q_1)+c_2x^2
+O\left(\frac{x^2}{(\log x)^M}\right).\label{case x}
\end{align}
In the case that $Q_2\le x$ (including the previous case), we may write
\begin{align}
 J(x;Q_1,Q_2)=&J(x;Q_1)-J(x;Q_2)\nonumber\\
=&c_1x^2\log(Q_2/Q_1)
-\frac{\varphi(m_K)}{\varphi_K(m_K)}xQ_2\log(x/Q_2)-c_3xQ_2\nonumber\\
&+O\left(x^{3/4}Q_2^{5/4}\right)
+O\left(\frac{x^2}{(\log x)^M}\right).\label{case le x}
\end{align}
Theorem~\ref{asymp} now follows by 
combining~\eqref{sieve}, \eqref{second simp}, \eqref{H simp},
\eqref{case x}, and~\eqref{case le x}.
\end{proof}

\bibliographystyle{plain}
\bibliography{references}

\def\cprime{$'$}
\begin{thebibliography}{10}

\bibitem{Bar:1964}
M.B. Barban.
\newblock On the distribution of primes in arithmetic progressions ``on
  average".
\newblock {\em Dokl. Akad. Nauk UzSSR}, 5:5--7, 1964.
\newblock (Russian).

\bibitem{DH:1966}
H.~Davenport and H.~Halberstam.
\newblock Primes in arithmetic progressions.
\newblock {\em Michigan Math. J.}, 13:485--489, 1966.

\bibitem{DH:1968}
H.~Davenport and H.~Halberstam.
\newblock Corrigendum: ``{P}rimes in arithmetic progression''.
\newblock {\em Michigan Math. J.}, 15:505, 1968.

\bibitem{Dav:1980}
Harold Davenport.
\newblock {\em Multiplicative Number Theory}.
\newblock Springer-Verlag, New York, 1980.

\bibitem{Gol:1970}
Larry~Joel Goldstein.
\newblock A generalization of the {S}iegel-{W}alfisz theorem.
\newblock {\em Trans. Amer. Math. Soc.}, 149:417--429, 1970.

\bibitem{Hin:1981}
J{\"u}rgen~G. Hinz.
\newblock On the theorem of {B}arban and {D}avenport-{H}alberstam in algebraic
  number fields.
\newblock {\em J. Number Theory}, 13(4):463--484, 1981.

\bibitem{Hoo:1975}
Christopher Hooley.
\newblock On the {B}arban-{D}avenport-{H}alberstam theorem. {I}.
\newblock {\em J. Reine Angew. Math.}, 274/275:206--223, 1975.
\newblock Collection of articles dedicated to Helmut Hasse on his seventy-fifth
  birthday, III.

\bibitem{Lan:1994}
Serge Lang.
\newblock {\em Algebraic number theory}, volume 110 of {\em Graduate Texts in
  Mathematics}.
\newblock Springer-Verlag, New York, second edition, 1994.

\bibitem{Lav:1960}
A.~F. Lavrik.
\newblock On the twin prime hypothesis of the theory of primes by the method of
  {I}. {M}. {V}inogradov.
\newblock {\em Soviet Math. Dokl.}, 1:700--702, 1960.

\bibitem{Mon:1970}
H.~L. Montgomery.
\newblock Primes in arithmetic progressions.
\newblock {\em Michigan Math. J.}, 17:33--39, 1970.

\bibitem{Mur:2001}
M.~Ram Murty.
\newblock {\em Problems in analytic number theory}, volume 206 of {\em Graduate
  Texts in Mathematics}.
\newblock Springer-Verlag, New York, 2001.
\newblock Readings in Mathematics.

\bibitem{Smi}
Ethan Smith.
\newblock A generalization of the {B}arban-{D}avenport-{H}alberstam {T}heorem
  to number fields.
\newblock {\em J. Number Theory}, 129(11):2735--2742, 2009.

\bibitem{Wil:1969}
Robin~J. Wilson.
\newblock The large sieve in algebraic number fields.
\newblock {\em Mathematika}, 16:189--204, 1969.

\end{thebibliography}
\end{document}